\newtheorem{theorem}{Theorem}[section]
\newtheorem{lemma}[theorem]{Lemma}
\newtheorem{proposition}[theorem]{Proposition}
\newtheorem{corollary}[theorem]{Corollary}
\newtheorem{rem}{Remark}
\newtheorem{example}{Example}
\newtheorem{ass}{Assumption}
\newtheorem{excont}{Example}
\newcommand{\henrik}[1]{\ifthenelse{\boolean{showcomments}}
{\textcolor{Blue}{Henrik says: #1}}{}}
\newcommand{\emma}[1]{\ifthenelse{\boolean{showcomments}}
{\textcolor{Red}{Emma says: #1}}{}}
\newcommand{\martin}[1]{\ifthenelse{\boolean{showcomments}}
{\textcolor{Skyblue}{Martin says: #1}}{}}
\newcommand{\todo}[1]{\ifthenelse{\boolean{showcomments}}
{\textcolor{Green}{To Do: #1}}{}}
\newcommand{\newtext}[1]{\ifthenelse{\boolean{shownew}}
{{#1}}{}}
\newcommand{\hn}{$\mathcal{H}_2$ }
\newcommand{\cl}{\mathcal{L}}
\newcommand{\ldn}{\lambda_n}
\newcommand*\wideestimates{\mathrel{\widehat{=}}}
\newcolumntype{L}[1]{>{\raggedright\let\newline\\\arraybackslash\hspace{0pt}}m{#1}}
\newcolumntype{C}[1]{>{\centering\let\newline\\\arraybackslash\hspace{0pt}}m{#1}}
\newcolumntype{R}[1]{>{\raggedleft\let\newline\\\arraybackslash\hspace{0pt}}m{#1}}
\definecolor{gray3}{rgb}{0.75, 0.75, 0.75}
\definecolor{gray2}{rgb}{0.5, 0.5, 0.5}
\definecolor{gray1}{rgb}{0.25, 0.25, 0.25}
\definecolor{gray0}{rgb}{0.15, 0.15, 0.15}
\title{\LARGE \bf On the Coherence of Large-Scale Networks \\ with Distributed PI and PD Control}
\author{Emma Tegling and Henrik Sandberg %
\thanks{The authors are with the School of Electrical Engineering and the ACCESS Linnaeus Center, KTH Royal Institute of Technology, SE-100 44 Stockholm, Sweden {(\tt tegling, hsan@kth.se)}. }  \thanks{This work was supported in part by the Swedish Research Council through grants 2013-5523 and 2016-00861.}
 }%
\begin{document}
\maketitle
\thispagestyle{empty}
\pagestyle{empty}

\begin{abstract}
We consider distributed control of double-integrator networks, where agents are subject to stochastic disturbances. We study performance of such networks in terms of \emph{coherence}, defined through an \hn norm metric that represents the variance of nodal state fluctuations. Specifically, we address known performance limitations of the standard consensus protocol, which cause this variance to scale unboundedly with network size for a large class of networks. We propose distributed proportional integral (PI) and proportional derivative (PD) controllers that relax these limitations and achieve bounded variance, in cases where agents can access an absolute measurement of one of their states. This case applies to, for example, frequency control of power networks and vehicular formation control with limited sensing. We discuss optimal tuning of the controllers with respect to network coherence and demonstrate our results in simulations. 



 
%
%
\end{abstract}


\section{Introduction}
\label{sec:intro}
The problem of distributed control of networked systems has been extensively studied over the past decades \cite{Jadbabaie2003, OlfatiSaber, RenAtkins2005}. \newtext{A well-studied subproblem in this context is that of consensus,} 
 that is, to drive the network of agents to the same state. When the system is subject to external disturbances or uncertainties, there are, however, limitations to the performance achievable through distributed control. In particular, there are limits on how close the agents of a large-scale network can come to a state of consensus, or what is often called the \textit{coherence} of the network. Such fundamental limitations were studied in~\cite{Bamieh2012}, where scalings of measures of network coherence with network size were established for first- and second-order consensus networks. It was shown that reasonable performance in sparse networks, such as vehicle platoons, requires that each agent can access measurements of its own states with respect to a global reference frame; what we refer to as \emph{absolute feedback}. 


The importance of absolute feedback for network coherence was also recognized in \cite{Patterson2010, Lin2014, Pirani2015}, where the impact of leaders (select agents with access to absolute measurements) was studied in first-order consensus networks. 
In this work, we will focus on problems of second-order consensus, and \newtext{do not consider} adding absolute measurements to improve performance, which would require increasing the agents' sensory capabilities. \newtext{Instead,} we propose the use of alternative controller structures, namely controllers with integral and derivative action in addition to standard proportional control. 

Proportional-integral-derivative (PID) control has been studied for consensus networks in \cite{Lombana2015} and the role of integral action in disturbance rejection in both first- and second-order consensus networks was described in \cite{Freeman2006, Andreasson2014TAC, Seyboth2015}. While these works have focused on proving convergence of the respective control strategies, we address the question of their performance and their scalability to large-scale networks.
In particular, we show that for certain cases of second-order consensus considered in~\cite{Bamieh2012}, distributed PI and PD control can relax the performance limitations found therein. 

We study cases in which the agents have access to absolute measurements of only one of their states and evaluate performance in terms of an \hn norm metric representing network coherence. For these cases, we compare proportional control, with which the \hn norm metric scales unboundedly with network size for a large class of networks, to distributed PI and PD control, where the same metric can be shown to be bounded for any network. The apparent reason for this improved scalability is that integral or derivative action, when applied to the available absolute state measurement, emulates absolute feedback from the other state. For example, the derivative of a position measurement corresponds to a velocity measurement. While this may seem intuitive for \emph{ideal} integral or derivative action, the controllers we consider here are modified to account for imperfections. In particular, we first consider a \emph{distributed averaging PI (DAPI)} controller, in which the integral state is passed through a consensus filter to prevent destabilizing drift due to measurement noise (``dead-reckoning''). 
\newtext{ A version of this controller has previously been studied in the context of electric power networks in~\cite{Andreasson2014ACC, SimpsonPorco2015, Tegling2016ACC}. }
Second, we model low-pass filtering of the derivative action in the proposed \emph{filtered distributed PD (F-DPD)} controller. Interestingly, the improved scalability is achieved with any design of these filters, yet this paper also provides insights into how their design impacts performance, as well as to \newtext{their optimal tunings}.

The remainder of this paper is organized as follows. We introduce the problem setup and the performance metric in Section~\ref{sec:prels}, where we also give examples from applications. In Section~\ref{sec:limits}, we discuss known limitations of proportional control. We introduce the DAPI and F-DPD controllers and evaluate their performance in Section~\ref{sec:PID}, then discuss their tuning in Section~\ref{sec:tuning}. We conclude in Section~\ref{sec:discussion}.

\section{Preliminaries}
\label{sec:prels}
\subsection{Notation}
Consider a network of $N$ identical agents modeled by the undirected graph $\mathcal{G} = \{\mathcal{V}, \mathcal{E}\}$, where $\mathcal{V} = \{1,2,\ldots,N\}$ is the set of nodes and $\mathcal{E} \subset \mathcal{V} \times \mathcal{V}$ is the set of edges. Denote by $\mathcal{N}_i$ the neighbor set of agent $i$ in $\mathcal{G}$. Assume that each edge in $\mathcal{E}$ has an associated weight $l_{ij}  = l_{ji}{>0}$ and denote by $\mathcal{L}$ the weighted graph Laplacian whose elements $\mathcal{L}_{ij} = \sum_{k = 1,~k\neq i}^N l_{ik}$ if $i = j$ and $-l_{ij}$ otherwise.  Throughout this paper, the graph is assumed to be connected. The eigenvalues of $\cl$ can thus be written $0 = \lambda_1 < \lambda_2 \le \ldots \le \lambda_N$.  

\subsection{Problem set-up}
We consider a set $\mathcal{V}$ of agents, each governed by double-integrator dynamics and subject to stochastic disturbances:
\begin{equation}
\label{eq:doubledyn}
\begin{aligned}
\dot{x}_i(t) & = v_i(t) \\
\dot{v}_i(t) & = u_i(t) + w_i(t).
\end{aligned}
\end{equation}
Here, $u_i(t)$ is a control input and $w_i(t)$ is assumed to be a zero-mean Gaussian white noise process that is uncorrelated across nodes. Henceforth, we will often drop the time-dependence in the notation. Without loss of generality, we assume that the states $x_i,v_i \in \mathbb{R}$ for each agent $i$ represent deviations from a desired trajectory $\bar{x}_i$ with common constant velocity $\bar{v}$, so that $\bar{x}_i(t) := \bar{v}t +\delta_i$, where $\delta_i$ is a given setpoint.  The states $x_i,v_i$ may carry different meanings depending on the application. We provide two examples at the end of this section. 


The control objective is for all agents to follow the desired trajectory, in our case, to drive the states to zero. 
\newtext{We model full state feedback control} and first consider a standard linear consensus algorithm (see, for example,~\cite{RenAtkins2005})
\begin{equation}
\label{eq:static}
u_i = -\!\! \sum_{j \in \mathcal{N}_i}\! f_{ij}(x_i - x_j)  -\!\! \sum_{j \in \mathcal{N}_i} \! g_{ij}(v_i - v_j) -f_0x_i - g_0v_i,
\end{equation}
where $f_{ij},~ g_{ij},~f_0,~g_0$ are nonnegative gains. We refer to \eqref{eq:static} as \emph{proportional (P) control}, since the feedback is proportional to state measurements at time $t$.

Throughout this paper, we distinguish between two types of state measurements and feedback: \emph{relative} and \emph{absolute}. Relative measurements are taken with respect to neighbors, as in $x_i - x_j,~v_i - v_j$ for $j \in \mathcal{N}_i$, while absolute measurements imply that agent $i$ can access its own state $x_i$ or $v_i$. In \eqref{eq:static}, we say that \textit{absolute feedback} from the state $x_i$ ($v_i$) exists if $f_0>0$ ($g_0>0$). In the following, we will consider cases in which absolute measurements are only available for \textit{either} $x_i$ or $v_i$, \newtext{and let this apply to all agents. Regarding the assumption on uniform gains $f_0,g_0$, see Remark~\ref{rem:relaxf0g0}.}

By defining the state vectors $x = [x_1,\ldots,x_N]^T$ and $v = [v_1,\ldots,v_N]^T$, we can write the system \eqref{eq:doubledyn} with control \eqref{eq:static} as
\begin{equation}
\label{eq:staticvec}
\begin{bmatrix}
\dot{x} \\ 
\dot{v}
\end{bmatrix} = \begin{bmatrix}
0 & I \\ 
-\mathcal{L}_F - f_0I & -\mathcal{L}_G -g_0I
\end{bmatrix}\begin{bmatrix}
x\\ v
\end{bmatrix} + \begin{bmatrix}
0 \\ I
\end{bmatrix} w,
\end{equation}
where $\mathcal{L}_{F(G)}$ is the weighted graph Laplacian with edge weights $f_{ij}~(g_{ij})$ for $\{i,j\} \in \mathcal{E}$, and $w$ is the noise vector. 

In order to provide tractable closed-form solutions in what follows, we will impose the following assumption:
\begin{ass}[Proportional gains] 
\label{ass:proportional}
The ratio $f_{ij}/g_{ij}$ is uniform across all $\{i,j\} \in \mathcal{E}$. We write $f_{ij} = fl_{ij}$ and $g_{ij} = gl_{ij}$, so that $\mathcal{L}_F = f\mathcal{L}$ and $\mathcal{L}_G = g\mathcal{L}$, \newtext{with $f,g\ge0$}.
\end{ass}

\begin{example}[Vehicular formation]
Consider a set of $N$ vehicles in a formation, where the control objective for each vehicle is to follow the trajectory $\bar{x}_i(t) := \bar{v}t +i\Delta $, despite being subject to random forcings $w_i$. Here, $\bar{v}$ is a common cruising velocity and $\Delta$ is the desired inter-vehicle spacing. 
Each vehicle controls its velocity according to \eqref{eq:static}, with nearest-neighbor interactions, so that $\mathcal{N}_i = \{i +1,i-1\}$ for $i = 2,\ldots,{N-1}$, $\mathcal{N}_1 = \{2\}$ and $\mathcal{N}_N = \{N-1\}$, resulting in a 1-D string formation. The closed-loop system becomes~\eqref{eq:staticvec}.

Consider a scenario in which the vehicles are not equipped with speedometers, but have radars to measure relative positions and velocities with respect to neighbors. Furthermore, the position of the lead vehicle is broadcast across the network, so each vehicle can calculate its own position. 
\newtext{In this example,} there is therefore absolute feedback from the position $x$, but not from the velocity $v$, so $f_0>0$ while $g_0 = 0$ in~\eqref{eq:static}. 
\end{example}

\begin{example}[Frequency control in power network]
\label{ex:power}
Synchronization in power networks is typically studied through a system of coupled swing equations. Under some simplifying assumptions, the linearized swing equation 
can be written as: 
\begin{equation}
\label{eq:swingeq}
m \ddot{\theta}_i + d\dot{\theta}_i = -\sum_{j \in \mathcal{N}_i} b_{ij}(\theta_i - \theta_{j}) +P_{m,i}, 
\end{equation}
where $\theta_i$ is the phase angle deviation at node $i$, $\dot{\theta}_i = \omega_i$ is the frequency deviation,  and $m_i$ and $d_i$ are, respectively, inertia and damping coefficients. The term $P_{m,i}$ can be seen as the net power injection at the node, and $b_{ij}$ is the susceptance of the $(i,j)^{\mathrm{th}}$ power line. In the absence of any additional control input, we also refer to \eqref{eq:swingeq} as \textit{frequency droop control}.

The system~\eqref{eq:swingeq} can be cast as the P-controlled system~\eqref{eq:staticvec}, with $x \wideestimates \theta$, $v \wideestimates \omega $, $g_0 \wideestimates \frac{d}{m}$, $\mathcal{L}_F \wideestimates \frac{b_{ij}}{l_{ij} m}\mathcal{L}$, $\cl_G = 0$ and $f_0 = 0$. The power injection $P_{m}$ can be absorbed into the disturbance input $w$, which we take to represent random fluctuations in generation and load. In this problem, there is absolute feedback from the frequency $\omega$, but only relative feedback from the phase angles $\theta$ (absolute measurement of phase angles would require phasor measurement units (PMUs), which in general are not available).

\subsection{Performance metric}
We evaluate performance of control laws in terms of measures of network \emph{coherence}. Coherence can be understood as a measure of network disorder, or in other words, how well the control objective of consensus is achieved. Similar to \cite{Bamieh2012, Patterson2010}, we define it as the steady-state variance of the agents' deviation from the network average:
\begin{equation}
\label{eq:perfmeas}
V = \lim_{t \rightarrow \infty} \sum_{i \in \mathcal{V}} \mathbb{E}\left\lbrace \left( x_i(t) - \frac{1}{N}\sum_{j \in \mathcal{V}} x_j(t) \right)^2 \right\rbrace.
\end{equation}
The quantity $V$ \newtext{can be evaluated analytically as} the squared \hn norm of an input-output system $\mathcal{S}$ from the disturbance input~$w(t)$ in \eqref{eq:doubledyn} to a performance output $y(t)$ defined as
\begin{equation}
\label{eq:ydef}
y(t)  = \left(I - \frac{1}{N}\mathbf{1}\mathbf{1}^T \right)x(t),
\end{equation}
where $\mathbf{1}$ is the ($N\times 1$) vector of all ones. 

To better analyze the scaling of \eqref{eq:perfmeas} with the network size, we will normalize it by the total number of agents:
\begin{equation}
\label{eq:pernodevariance}
V_N = \frac{1}{N}V.
\end{equation}
It is the scaling of $V_N$ as $N$ increases that we can refer to as the level of coherence in the network. 
For a given control law to be \emph{scalable}, $V_N$ should be uniformly bounded, in which case the system can also be regarded as fully coherent. 


\end{example}

\section{Limitations of proportional control}
\label{sec:limits}
We begin the discussion on limitations of the proportional control law~\eqref{eq:static} by stating the closed-form expression for its performance.

\begin{lemma} \label{thm:static}
The scaled \newtext{performance }output variance \eqref{eq:pernodevariance} for the P-controlled system \eqref{eq:staticvec} is given by
\begin{equation}
\label{eq:normstatic}
V_N^{\mathrm{P}} = \frac{1}{2N}\sum_{n = 2}^N \frac{1}{ (f_0 + f \lambda_n) (g_0 + g \lambda_n)}.
\end{equation}
\end{lemma}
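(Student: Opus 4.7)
The plan is to diagonalize in the Laplacian eigenbasis, decouple the closed loop into $N$ independent scalar second-order systems, and then add up the modal $\mathcal{H}_2$ contributions. Since $\mathcal{L}$ is symmetric positive semidefinite, it admits an orthonormal eigendecomposition $\mathcal{L} = U\Lambda U^T$ with $\Lambda = \mathrm{diag}(\lambda_1,\ldots,\lambda_N)$ and $u_1 = \mathbf{1}/\sqrt{N}$. Under Assumption~\ref{ass:proportional}, both $\mathcal{L}_F = f\mathcal{L}$ and $\mathcal{L}_G = g\mathcal{L}$ are simultaneously diagonalized by $U$. Introducing modal states $\tilde{x} = U^T x$, $\tilde{v} = U^T v$, and $\tilde{w} = U^T w$, the system \eqref{eq:staticvec} decomposes into $N$ decoupled scalar loops
\[
\ddot{\tilde{x}}_n + (g_0 + g\lambda_n)\,\dot{\tilde{x}}_n + (f_0 + f\lambda_n)\,\tilde{x}_n = \tilde{w}_n, \qquad n=1,\ldots,N.
\]
Because $U$ is orthogonal and $w$ has i.i.d. unit-intensity components, $\tilde{w}$ is again a vector of independent standard white noise processes.

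Next I would rewrite the performance output \eqref{eq:ydef} in modal coordinates. The projector $I - \tfrac{1}{N}\mathbf{1}\mathbf{1}^T$ is precisely $U\,\mathrm{diag}(0,1,\ldots,1)\,U^T$, i.e. the orthogonal projection onto the span of the non-consensus eigenvectors. Hence $\|y\|^2 = \sum_{n=2}^N \tilde{x}_n^2$, and by orthogonality the squared $\mathcal{H}_2$ norm of the map $w \mapsto y$ equals the sum of the squared $\mathcal{H}_2$ norms of the scalar transfer functions $G_n(s) = 1/(s^2 + (g_0+g\lambda_n)s + (f_0+f\lambda_n))$ for $n = 2,\ldots,N$. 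The mode $n=1$ (which is marginally stable when $f_0 = g_0 = 0$) is annihilated by the output projection and therefore does not contribute; this also confirms that $V$ is finite whenever $f_0 + f\lambda_n > 0$ and $g_0 + g\lambda_n > 0$ for $n\ge 2$, which holds under the stated nonnegativity of the gains together with connectedness of the graph.

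The final step is a standard scalar $\mathcal{H}_2$ computation: for $G(s) = 1/(s^2 + as + b)$ with $a,b>0$, solving the Lyapunov equation for the controllable canonical realization (or applying a textbook table) yields $\|G\|_{\mathcal{H}_2}^2 = 1/(2ab)$. Substituting $a = g_0 + g\lambda_n$ and $b = f_0 + f\lambda_n$, summing over $n=2,\ldots,N$, and normalizing by $N$ as in \eqref{eq:pernodevariance} gives the claimed expression
\[
V_N^{\mathrm{P}} = \frac{1}{N}\sum_{n=2}^N \|G_n\|_{\mathcal{H}_2}^2 = \frac{1}{2N}\sum_{n=2}^N \frac{1}{(f_0 + f\lambda_n)(g_0 + g\lambda_n)}.
\]

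The only genuinely delicate point is the handling of the consensus mode: if either $f_0$ or $g_0$ vanishes, the $n=1$ loop has a pole on the imaginary axis (or at the origin), so the unprojected system is not $\mathcal{H}_2$-stable. The argument above avoids this by working directly with the output $y$, showing that the projection exactly removes the problematic mode. I would therefore make sure to state this cancellation explicitly, since it is what makes \eqref{eq:normstatic} well-defined in the important case $f_0 = 0$ or $g_0 = 0$ highlighted in the examples.
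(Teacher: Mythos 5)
Your proof is correct and follows essentially the same route as the paper: an orthogonal/unitary change of coordinates that diagonalizes $\mathcal{L}$ (and hence $\mathcal{L}_F,\mathcal{L}_G$ under Assumption~\ref{ass:proportional}), decoupling into $N$ scalar second-order modes, observing that the $n=1$ average mode is removed by the output projection, and evaluating each modal $\mathcal{H}_2$ norm as $1/(2ab)$ via the Lyapunov equation. Your explicit remark on why the projection renders $V$ finite when $f_0=0$ or $g_0=0$ matches the paper's stability discussion of the unobservable mode.
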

 \vspace{1mm}
\begin{proof}
See Appendix.
\end{proof}
Clearly, in the absence of absolute feedback (that is, if $f_0 = 0$ and/or $g_0 = 0$), the sum in \eqref{eq:normstatic} tends towards infinity if one or more of the Laplacian eigenvalues $\lambda_n$, $n\ge 2$ approaches zero. Laplacian eigenvalues typically tend to zero as a network grows large, unless it is densely interconnected. If the sum increases faster than linearly in $N$, the performance metric~$V_N^P$ will scale badly and the network lacks coherence.  
This was the problem considered in \cite{Bamieh2012}, where asymptotic scalings of expressions like \eqref{eq:normstatic} were derived for networks built over $d$-dimensional torical lattices~$\mathbb{Z}_N^d$. Their results for the most problematic 1-dimensional network (a ring graph) \newtext{show that $V_N^P$ scales linearly in $N$ if one of $f_0,g_0$ is zero, and as $N^3$ if both are zero.   }
The same scalings can be derived using arguments based on effective resistances for networks that can be embedded in lattice networks, see~\cite{Barooah}.

\newtext{This points} at a fundamental limitation to the performance of the P controller~\eqref{eq:staticvec}, as the variance~\eqref{eq:normstatic} is only bounded for any network if there is absolute feedback from both the states $x_i$ and $v_i$. That is, both measurements are required for P control to be scalable. 
An important question 
has therefore been whether alternative controller structures can alleviate these limitations. In the next section, we present linear controllers for which it suffices to have an absolute measurement of either $x_i$ or $v_i$ to achieve bounded scalings for any network.

\begin{rem}
\label{rem:relaxf0g0}
\newtext{The assumption that the absolute feedback gains $f_0,g_0$ are uniform across the network is needed to derive the closed-form expression~\eqref{eq:normstatic}, but can be shown not to be important for the main conclusion that $V_N^P$ scales badly in the absence of absolute feedback. With non-uniform gains $f_0,g_0$, bounds on $V_N^P$ on the form~\eqref{eq:normstatic} can instead be stated in terms of the minimum and maximum available gains. }
\end{rem}

\section{Distributed PI and PD control }
\label{sec:PID}
We now introduce the distributed PI and PD controllers with which we propose to address the limitations posed by proportional control. We show that these controllers achieve bounded output variances~\eqref{eq:pernodevariance}, and demonstrate the performance improvement through numerical examples. 
\subsection{Absolute $v$-feedback: Distributed PI control}
\label{sec:dapi}
Suppose that an absolute measurement of $v_i$ is available, while one of $x_i$ is not. 
For this case, we propose to use the following \emph{distributed \newtext{averaging} proportional-integral \newtext{(DAPI)}} controller:
\begin{align} \nonumber
u_i &= - \!\! \sum_{j \in \mathcal{N}_i}\!  f_{ij}(x_i - x_j)  -\! \!\sum_{j \in \mathcal{N}_i} \! g_{ij}(v_i - v_j)  - g_0v_i + K_Iz_i\\
\dot{z}_i &= -v_i - \!\! \sum_{j \in \mathcal{N}_i}c_{ij}(z_i - z_j),
\label{eq:dapi}
\end{align}
where $z_i$ is the integral state, $K_I$ is a positive gain (integral gain), and we will discuss the averaging filter $-\sum_{j \in \mathcal{N}_i}c_{ij}(z_i - z_j)$ shortly. First, note that if $c_{ij} = 0$ for all $\{i,j\}$, it ideally holds that $z_i(t) = \int_0^t  v_i(\tau) d\tau + z_i(0)$ and since, by definition, $\int_0^t  v_i(\tau) d\tau = x_i(t) - x_i(0)$, the integral state $z_i$ would correspond to an absolute measurement of the state $x_i$, modulo initial values. In this case, we would therefore expect that the controller~\eqref{eq:dapi} has a performance similar to \eqref{eq:static} with absolute feedback from both $v_i$ and $x_i$. 
\newtext{However, as mentioned in the introduction, an averaging filter on the integral states with positive gains $c_{ij}$ is required to ensure system stability~\cite{Andreasson2014TAC}.} 
We will show that also with this filter, the desired performance scaling can be achieved.

By inserting \eqref{eq:dapi} into \eqref{eq:doubledyn} we obtain, in vector form:
\begin{equation}
\label{eq:DAPIvec}
\begin{bmatrix}
\dot{x} \\ 
\dot{v} \\ \dot{z}
\end{bmatrix} = \begin{bmatrix}
0 & I & 0 \\ 
-\mathcal{L}_F & -\mathcal{L}_G -g_0I & K_I \\
0 & -I & -\mathcal{L}_C
\end{bmatrix}\begin{bmatrix}
x\\ v \\ z
\end{bmatrix} + \begin{bmatrix}
0 \\ I \\ 0
\end{bmatrix} w.
\end{equation}
In line with Assumption~\ref{ass:proportional}, we make the following assumption on the weighted Laplacian matrix $\mathcal{L}_C$:
\begin{ass}
\label{ass:uniformC}
The gains $c_{ij}$ are proportional to $f_{ij},g_{ij}$ for all $\{i,j\}\in \mathcal{E}$. We write $c_{ij} = cl_{ij}$, so that $\mathcal{L}_C = c\mathcal{L}$, \newtext{with $c\ge 0$}.
\end{ass}\vspace{0.5mm}
Under Assumptions~\ref{ass:proportional}--\ref{ass:uniformC}, \newtext{the system~\eqref{eq:DAPIvec} is input-output stable with respect to the output~\eqref{eq:ydef}~\cite{Tegling2016Lic} and} its performance can be stated as follows:
\vspace{0.5mm}
\begin{proposition}
\label{thm:dapiresult}
The scaled \newtext{performance} output variance~\eqref{eq:pernodevariance} for the DAPI-controlled system~\eqref{eq:DAPIvec} is given by
\begin{small}
\begin{equation}
\label{eq:normdapi}
V_N^{\mathrm{DAPI}} \!\! = \frac{1}{2N} \!\! \sum_{n = 2}^N \frac{1}{ fg\lambda_n^2+ \frac{K_If(g_0 + \lambda_n(c+g))+g_0f\lambda_n(c^2\lambda_n + f + cg_0 )}{f + c g_0 + c \lambda_n (c+g)} }.
\end{equation}
\end{small}
\end{proposition}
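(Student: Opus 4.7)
My plan is to follow the same modal decomposition strategy that yields Lemma~\ref{thm:static}. Because of Assumptions~\ref{ass:proportional}--\ref{ass:uniformC}, the Laplacians $\mathcal{L}_F = f\mathcal{L}$, $\mathcal{L}_G = g\mathcal{L}$, $\mathcal{L}_C = c\mathcal{L}$ are all simultaneously diagonalizable by the orthogonal eigenbasis of $\mathcal{L}$. Applying the corresponding unitary change of coordinates to $(x,v,z)$ in~\eqref{eq:DAPIvec} decouples the closed-loop system into $N$ independent third-order modal subsystems indexed by the Laplacian eigenvalues $\lambda_n$, each driven by a scalar white-noise input (which remains unit-variance by orthogonality). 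The performance output~\eqref{eq:ydef} corresponds to projection onto the subspace orthogonal to $\mathbf{1}$, so after the same transformation it amounts to discarding the $n=1$ mode (associated with $\lambda_1=0$) and keeping the $x$-component of each remaining mode.

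Thus, by Parseval, $V_N^{\mathrm{DAPI}} = \frac{1}{N}\sum_{n=2}^{N} \|H_n\|_2^2$, where $H_n(s)$ is the scalar transfer function from the modal noise input to the modal $x$-coordinate of the system with state matrix
\begin{equation*}
A_n = \begin{bmatrix} 0 & 1 & 0 \\ -f\lambda_n & -g\lambda_n - g_0 & K_I \\ 0 & -1 & -c\lambda_n \end{bmatrix},\quad B_n = \begin{bmatrix} 0 \\ 1 \\ 0 \end{bmatrix},\quad C_n = \begin{bmatrix} 1 & 0 & 0 \end{bmatrix}.
\end{equation*}
Input-output stability of each such mode for $n\ge 2$ has been established in~\cite{Tegling2016Lic} under the standing assumptions, so the $\mathcal{H}_2$ norm is well defined.

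Next I would compute $\|H_n\|_2^2$ explicitly. Forming $\det(sI - A_n)$ gives a cubic denominator and a numerator obtained from cofactor expansion, yielding $H_n(s) = \frac{s+c\lambda_n}{a_3 s^3 + a_2 s^2 + a_1 s + a_0}$ with coefficients $a_3 = 1$, $a_2 = g_0 + (c+g)\lambda_n$, $a_1 = c g_0 \lambda_n + cg\lambda_n^2 + f\lambda_n + K_I$, and $a_0 = fc\lambda_n^2 + K_I(g_0+g\lambda_n)$ (to be verified from the expansion). The $\mathcal{H}_2$ norm of a strictly proper stable third-order SISO system $(b_2 s^2 + b_1 s + b_0)/(a_3 s^3 + a_2 s^2 + a_1 s + a_0)$ is given by the standard tabulated formula
\begin{equation*}
\|H\|_2^2 = \frac{b_0^2 a_2 a_3 + (b_1^2 - 2b_0 b_2) a_0 a_3 + b_2^2 a_0 a_1}{2 a_0 a_3 (a_1 a_2 - a_0 a_3)},
\end{equation*}
which I would apply with $b_2=0$, $b_1=1$, $b_0 = c\lambda_n$, and the $a_i$ above. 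Algebraic simplification then reduces the expression to the form in~\eqref{eq:normdapi}, after dividing numerator and denominator by $f + cg_0 + c\lambda_n(c+g)$ to bring the common factor out of the H$_2$ evaluation.

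The main obstacle is bookkeeping, not conceptual: the cubic Routh--Hurwitz denominator $a_0 a_3(a_1 a_2 - a_0 a_3)$ expands into many cross terms that must be carefully collected so that the result matches the compact rational expression claimed in~\eqref{eq:normdapi}. I would double-check by verifying two limiting cases: $c\to 0$ (ideal integrator, no averaging) should recover the performance of a P controller with effective absolute $x$-gain $f_0 = K_I/\lambda_n$-like behavior, and $K_I \to 0$ (no integral action) should reduce~\eqref{eq:normdapi} to~\eqref{eq:normstatic} with $f_0 = 0$, restoring Lemma~\ref{thm:static}.
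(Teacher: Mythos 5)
Your overall strategy coincides with the paper's: the published proof of Proposition~\ref{thm:dapiresult} is a one-line reference to the proof of Lemma~\ref{thm:static}, which performs exactly the unitary modal decomposition you describe and then evaluates each decoupled subsystem's $\mathcal{H}_2$ norm (via a Lyapunov equation rather than your tabulated integral formula---a cosmetic difference). Your modal triple $(A_n,B_n,C_n)$, the numerator $s+c\lambda_n$, and the coefficients $a_2$ and $a_1$ are all correct, as is the handling of the $n=1$ mode.

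The concrete gap is the coefficient you flagged ``to be verified'': expanding $\det(sI-A_n)$ gives $a_0=fc\lambda_n^2$, with \emph{no} $K_I(g_0+g\lambda_n)$ term. The gain $K_I$ enters the characteristic polynomial only through $a_1$, since its cofactor in the $(v,z)$ block is multiplied by the factor $s$ coming from the $(1,1)$ entry. Your own second sanity check is exactly the tool that catches this: at $K_I=0$ the $z$-dynamics decouple from $(x,v)$, so the denominator must factor as $(s+c\lambda_n)\bigl(s^2+(g_0+g\lambda_n)s+f\lambda_n\bigr)$ and cancel the zero at $-c\lambda_n$, recovering Lemma~\ref{thm:static} with $f_0=0$; this factorization forces $a_0=fc\lambda_n^2$. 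With the corrected $a_0$, your tabulated formula yields $\|H_n\|_2^2=\bigl(f+cg_0+c(c+g)\lambda_n\bigr)/\bigl(2f(a_1a_2-a_0)\bigr)$. One caution for the final bookkeeping step: when you compare $f(a_1a_2-a_0)$ against the denominator of~\eqref{eq:normdapi} multiplied through by $f+cg_0+c\lambda_n(c+g)$, the two differ by $fcgg_0\lambda_n^2$, and a direct Lyapunov check (e.g.\ $f=g=g_0=c=K_I=\lambda_n=1$ gives $\|H_n\|_2^2=2/11$, while~\eqref{eq:normdapi} gives $1/5$) indicates the factor $(c^2\lambda_n+f+cg_0)$ in~\eqref{eq:normdapi} should read $(c^2\lambda_n+f+cg_0+cg\lambda_n)$, in which case that summand collapses to $g_0f\lambda_n$. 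So if your corrected derivation fails to reproduce~\eqref{eq:normdapi} verbatim, do not take that as evidence that your computation is wrong; check the $K_I\to0$ and $c\to0$ limits instead, which your (corrected) expression passes.
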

 \vspace{2mm}
\begin{proof}
The result~\eqref{eq:normdapi} is derived in the same manner as in the proof of Lemma~\ref{thm:static}, and the details are omitted for the sake of brevity.
\end{proof}

While the expression~\eqref{eq:normdapi} is convoluted, it allows for the following important conclusion:
\begin{corollary}
\label{thm:dapicor}
For any positive and finite gains $K_I$ and $c$, $V_N^{\mathrm{DAPI}}$ in~\eqref{eq:normdapi} is uniformly bounded in $N$. It holds that 
\begin{equation}
\label{eq:boundsdapi}
0 < V_N^{\mathrm{DAPI}} < \frac{f+cg_0}{2K_Ifg_0}.
\end{equation}
\end{corollary}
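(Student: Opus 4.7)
The plan is to bound each summand in~\eqref{eq:normdapi} uniformly in $n$ and $N$ by a constant, and then use that the sum runs over only $N-1$ terms while being divided by~$2N$.

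Concretely, fix $n\ge 2$ and write $\lambda=\lambda_n>0$ (which is positive because the graph is connected). Let $D_n$ denote the denominator of the $n$-th summand. Since $fg\lambda^2\ge 0$, it suffices to show that
\begin{equation*}
T_n \;:=\; \frac{K_I f(g_0+\lambda(c+g))+g_0 f\lambda(c^2\lambda+f+cg_0)}{f+cg_0+c\lambda(c+g)} \;\ge\; \frac{K_I f g_0}{f+cg_0},
\end{equation*}
which would immediately give $D_n\ge \tfrac{K_I f g_0}{f+cg_0}$, and hence each summand is at most $\tfrac{f+cg_0}{K_I f g_0}$.

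The main (and only nontrivial) step is to verify this inequality. I would clear denominators and collect powers of $\lambda$: the resulting polynomial in $\lambda$ has no constant term (the constants cancel by construction), and its coefficients are nonnegative. Explicitly, after multiplying both sides by $(f+cg_0)[f+cg_0+c\lambda(c+g)]$ and simplifying, the difference of the two sides reduces to
\begin{equation*}
f\lambda\!\left[K_I f(c+g)+g_0(f+cg_0)^2\right]+g_0 f c^2(f+cg_0)\,\lambda^2,
\end{equation*}
which is manifestly $\ge 0$ for $\lambda\ge 0$ and $f,g,g_0,c,K_I\ge 0$. I expect the algebraic cancellation (in particular that the $K_I f g_0 c(c+g)\lambda$ term from the right-hand side is absorbed by the $K_I f(c+g)(f+cg_0)\lambda$ term from the left, leaving a clean $K_I f^2(c+g)\lambda$) to be the only place requiring a bit of care.

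Having established the per-term bound, the upper bound follows from
\begin{equation*}
V_N^{\mathrm{DAPI}}\;\le\;\frac{1}{2N}\sum_{n=2}^{N}\frac{f+cg_0}{K_I f g_0}\;=\;\frac{N-1}{N}\cdot\frac{f+cg_0}{2K_I f g_0}\;<\;\frac{f+cg_0}{2K_I f g_0},
\end{equation*}
uniformly in $N$. Positivity $V_N^{\mathrm{DAPI}}>0$ is immediate once one observes that, under Assumptions~\ref{ass:proportional}--\ref{ass:uniformC} and the hypotheses on $K_I,c$, every coefficient appearing in $D_n$ is nonnegative and the constant term $K_I f g_0/(f+cg_0)$ in the lower bound on $D_n$ is strictly positive, so each summand in \eqref{eq:normdapi} is finite and positive.
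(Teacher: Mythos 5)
Your proposal is correct and takes essentially the same approach as the paper: bound each summand $s_n$ by its value as $\lambda_n \to 0$, namely $\tfrac{f+cg_0}{K_I f g_0}$, and then use the $\tfrac{N-1}{2N}$ prefactor to get the strict uniform bound. The only difference is that you establish the per-term bound by explicitly clearing denominators and checking sign of the resulting polynomial in $\lambda$ (your expansion $f\lambda[K_I f(c+g)+g_0(f+cg_0)^2]+g_0 f c^2(f+cg_0)\lambda^2$ is correct), whereas the paper asserts monotonicity of $s_n$ in $\lambda_n$ and evaluates the supremum at $\lambda_n=0$.
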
\vspace{2mm}
\begin{proof}
It is readily verified that the expression $\frac{1}{ fg\lambda_n^2+ \frac{K_If(g_0 + \lambda_n(c+g))+g_0f\lambda_n(c^2\lambda_n + f + cg_0) }{f + c g_0 + c \lambda_n (c+g)} } :=s_n$ from the sum in~\eqref{eq:normdapi} is monotonically decreasing in $\lambda_n \ge 0$. Its supremum $\bar{s}_n$ is therefore obtained as $\lambda_n \rightarrow 0$ and is $\bar{s}_n = \frac{f+cg_0}{K_Ifg_0}$. Thus, the sum $\sum_{n = 2}^N s_n< (N-1)\bar{s}_n < N\frac{f+cg_0}{K_Ifg_0}$, which, inserted in \eqref{eq:normdapi}, gives the upper bound in \eqref{eq:boundsdapi}. The lower bound is obtained when $\lambda_n \rightarrow \infty$ for all $n = 2,\ldots, N$. This is, e.g., the case when $\mathcal{G}$ is the complete graph $\mathcal{K}_N$, as $N \rightarrow \infty$.         \end{proof}
\begin{rem}
\label{rem:zeroc}
If $c\rightarrow 0$ in \eqref{eq:normdapi}, the output variance for P control with absolute feedback from both $x_i$ and $v_i$ is retrieved, substituting the integral gain $K_I$ for $f_0$ in~\eqref{eq:normstatic}. This is in line with the discussion at the beginning of this section. Interestingly though, letting $c \rightarrow 0$ (which is only possible in theory) does not necessarily minimize $V_N^\mathrm{DAPI}$. We will discuss how to optimize performance for $c$ in Section~\ref{sec:tuning}.
\end{rem}
\begin{excont}[Continued]
Applying DAPI control to the power system dynamics~\eqref{eq:swingeq} yields the closed-loop system:
 \begin{equation}
\label{eq:dapiexample}
\begin{aligned}
 \dot{\omega}_i &= - \frac{d}{m}\theta_i -\frac{1}{m}\sum_{j \in \mathcal{N}_i} b_{ij}(\theta_i - \theta_j) + K_I z_i \\  
 \dot{z}_i & = -\omega_i - \sum_{j \in \mathcal{N}_i}c_{ij}(z_i - z_j)
\end{aligned}
\end{equation}
In this case, the integral action 
is also referred to as \emph{secondary frequency control}, the role of which is to eliminate any stationary frequency control errors that arise with just P control \cite{Andreasson2014ACC, SimpsonPorco2015}. As we have shown in this section, it also improves transient performance.

In Fig.~\ref{fig:dapisim}, we show a simulation of P control~\eqref{eq:swingeq} and DAPI control~\eqref{eq:dapiexample} on a radial power network \newtext{(here modeled by a path graph)} with, respectively, 10 and 100 nodes. The figures show that P control does not scale well to the larger network, while DAPI control does.


\end{excont}

\begin{figure*}
  \centering
  \subfloat[][Phase trajectories in power network with path graph topology in which $N = 10$ and $N=100$, under random initial frequency perturbations (only subset shown for $N=100$).  
  Here, $m = \frac{20}{\omega^{\mathrm{ref}}}$, $d = \frac{10}{\omega^{\mathrm{ref}}}$, with $\omega^{\mathrm{ref}} = 2\pi 60$ Hz, $K_I = 1$, $c = 0.1$, $b_{ij} = 0.3$.]{
 \includegraphics[scale = 1]{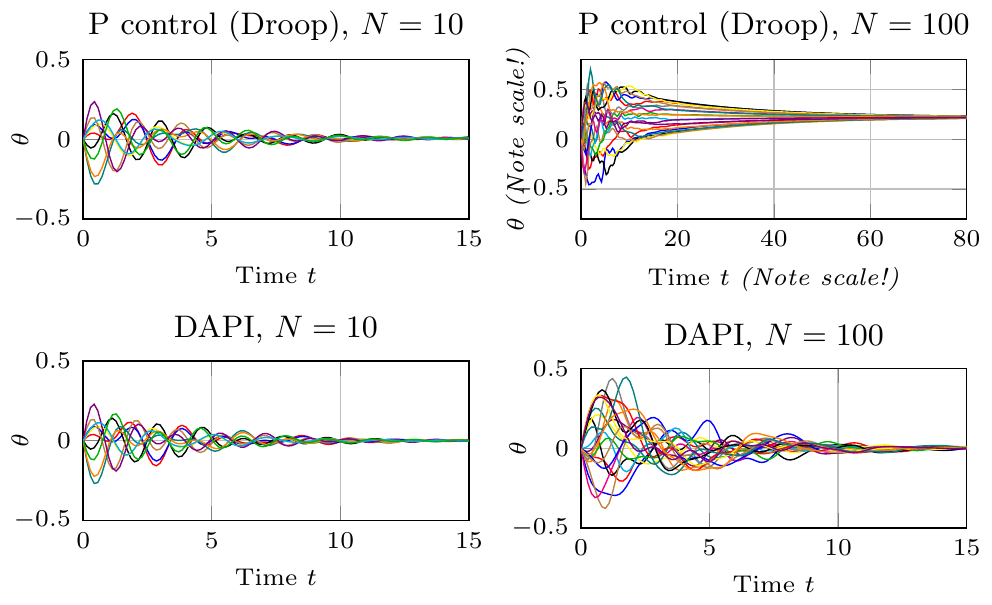}
  \label{fig:dapisim}  } \hspace{0.45cm}
  \subfloat[][ Subset of position trajectories for a 100 vehicle formation, subject to noise. 
  Here, $f = g = f_0 = K_D = 1$, $\tau = 0.1$. ] {
\includegraphics[scale = 1]{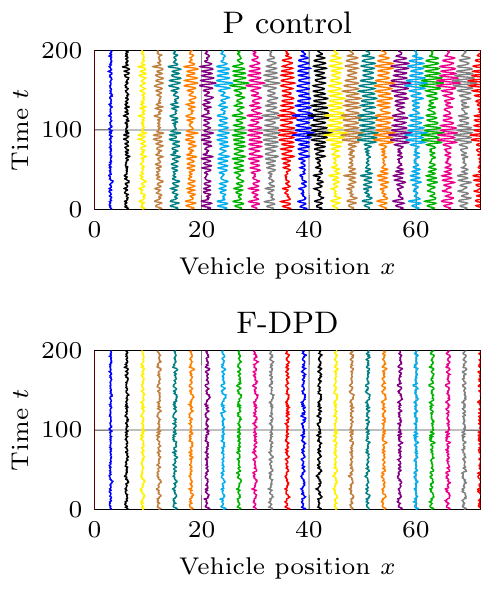}
  \label{fig:fdpdsim}} 
       \caption{Simulations from Examples~1 and 2. }
\label{fig:Sims}
\end{figure*}
%
\subsection{Absolute $x$-feedback: Distributed PD control}
Suppose now that an absolute measurement of $x_i$ is available, while one of $v_i$ is not. In this case, distributed proportional derivative (PD) control can be used to improve performance compared to the P controller. With ideal derivative action, the controller would take the form:
\begin{equation}
\label{eq:idealDPD}
u_i \! = \! -\!\! \!\sum_{j \in \mathcal{N}_i}\!\! f_{ij}(x_i  \! - \! x_j)   -\!\!\! \sum_{j \in \mathcal{N}_i} \! \! g_{ij}(v_i \!- \!v_j)  - \! f_0x_i  - \! K_D \! \frac{\mathrm{d}x_i}{\mathrm{d}t},
\end{equation}
which, since $\frac{\mathrm{d}x_i}{\mathrm{d}t} = v_i(t)$, is identical to~\eqref{eq:static} with absolute feedback from both $x_i$ and $v_i$, substituting the derivative gain $K_D$ for $g_0$. Clearly, this controller would therefore also have the same performance. Ideal derivative action is, however, neither possible nor desirable to implement, partly due to the sensitivity to high frequency noise. We therefore consider a controller where the derivative action is low-pass filtered:
\begin{align} \nonumber
u_i &= - \!\! \!\sum_{j \in \mathcal{N}_i} f_{ij}(x_i - x_j)  - \!\!\!\sum_{j \in \mathcal{N}_i} g_{ij}(v_i - v_j)  - {f_0x_i } + z_i\\
\dot{z}_i &= -\frac{1}{\tau}z_i - \frac{1}{\tau}K_D  {\frac{\mathrm{d}x_i}{\mathrm{d}t}}.
\label{eq:fdpd}
\end{align}
Here, the state $z_i$ corresponds to the derivative action, and $\tau>0$ is the time constant of the filter. We will refer to the controller~\eqref{eq:fdpd} as \textit{filtered distributed PD control (F-DPD)}. Note that the ideal PD controller is retrieved for $\tau = 0$.
\begin{rem}
An alternative approach would be to apply ideal derivative action and low-pass filter the entire control signal $u_i$ in~\eqref{eq:idealDPD}. The resulting performance scaling is similar. We choose the controller~\eqref{eq:fdpd} to enable a better comparison to the P controller~\eqref{eq:static}, as the proportional action is modeled as unfiltered in both.  
\end{rem}

The system~\eqref{eq:doubledyn} with the controller~\eqref{eq:fdpd} becomes:
\begin{equation}
\label{eq:FDPDvec}
\begin{bmatrix}
\dot{x} \\ 
\dot{v} \\ \dot{z}
\end{bmatrix} = \begin{bmatrix}
0 & I & 0 \\ 
- \! \mathcal{L}_F \! - \! f_0I & - \! \mathcal{L}_G & I \\
0 & - \! \frac{1}{\tau} K_D I & - \! \frac{1}{\tau}I 
\end{bmatrix} \!\! \begin{bmatrix}
x\\ v \\ z
\end{bmatrix}  + \begin{bmatrix}
0 \\ I \\ 0
\end{bmatrix} \! w.
\end{equation}
The performance of this system, under Assumption~\ref{ass:proportional}, is given by the following proposition:

\begin{proposition}
\label{thm:fdpd}
The scaled \newtext{performance} output variance~\eqref{eq:pernodevariance} for the F-DPD-controlled system~\eqref{eq:FDPDvec} is given by

\vspace{-3.5mm}
\begin{small}
\begin{equation}
\label{eq:normdfpf}
V_N^{\mathrm{F-DPD}} \!\! = \! \frac{1}{2N} \!\! \sum_{n = 2}^N \! \frac{1}{  (f_0\!+\!f\ldn) \!  \left( \! g\ldn \!+ \!\frac{K_D(\tau g \ldn \!+\!1 )}{\tau^2 (f_0\!+ \! f\ldn)\! +\! \tau g \ldn \!+ \!1} \! \right)   }
\end{equation}
\end{small}
\end{proposition}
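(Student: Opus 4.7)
The plan is to follow the same modal decomposition used in the proof of Lemma~\ref{thm:static}. First, diagonalize $\mathcal{L} = U\Lambda U^T$ with $U$ orthogonal and its first column proportional to $\mathbf{1}$, then apply the change of variables $\hat{x} = U^T x$, $\hat{v} = U^T v$, $\hat{z} = U^T z$, $\hat{w} = U^T w$. Orthogonality of $U$ preserves the white-noise statistics of the disturbance, and under Assumption~\ref{ass:proportional} the system~\eqref{eq:FDPDvec} decouples into $N$ independent third-order subsystems indexed by the Laplacian eigenvalues $\lambda_n$. The output $y = (I - \tfrac{1}{N}\mathbf{1}\mathbf{1}^T)x$ projects out the mean mode, leaving $\hat{y}_n = \hat{x}_n$ for $n \ge 2$ and $\hat{y}_1 = 0$; hence only modes $n \ge 2$ contribute to $V_N^{\mathrm{F-DPD}}$.

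Next, for each mode $n \ge 2$ I would eliminate $\hat{z}_n$ via its Laplace-domain relation $\hat{z}_n(s) = -\tfrac{K_D s}{\tau s + 1}\hat{x}_n(s)$ (using $\hat{v}_n = s\hat{x}_n$), yielding the scalar transfer function from $\hat{w}_n$ to $\hat{y}_n$,
$$G_n(s) = \frac{\tau s + 1}{\tau s^3 + (1+\tau g\lambda_n)s^2 + (g\lambda_n + \tau(f_0+f\lambda_n) + K_D)s + (f_0+f\lambda_n)}.$$
Stability of this third-order denominator for positive $f_0, f, g, K_D, \tau$ follows from the Routh--Hurwitz criterion, justifying that the $H_2$ norm is well defined.

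The $H_2$ norm of each mode can then be obtained from the standard closed-form expression for third-order SISO systems, or equivalently by solving the Lyapunov equation $A_n^T P_n + P_n A_n = -C_n^T C_n$ and reading off $\|G_n\|_2^2 = B_n^T P_n B_n$. Denoting the denominator coefficients by $a_3,a_2,a_1,a_0$ (in descending powers of $s$), this evaluates to $\|G_n\|_2^2 = (\tau^2 a_0 + a_2)/(2a_0(a_1 a_2 - a_0 a_3))$ since the numerator of $G_n$ has $b_0=1$, $b_1=\tau$, $b_2=0$. Summing over $n = 2,\ldots,N$ and dividing by $N$ then yields the expression~\eqref{eq:normdfpf}.

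The one step I expect to be tedious rather than conceptually hard is matching the ratio from the standard formula with the displayed form in~\eqref{eq:normdfpf}. This reduces to verifying the algebraic identity
$$a_1 a_2 - a_0 a_3 = g\lambda_n\bigl(\tau^2 a_0 + a_2\bigr) + K_D\bigl(1 + \tau g \lambda_n\bigr),$$
which follows from direct expansion once one notices that the $\tau(f_0+f\lambda_n)$ cross-terms between $a_1 a_2$ and $a_0 a_3$ cancel. Dividing numerator and denominator of the resulting fraction by $\tau^2 a_0 + a_2$ then produces exactly the claimed expression, completing the proof.
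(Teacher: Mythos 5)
Your proposal is correct and follows essentially the same route as the paper: modal decomposition of~\eqref{eq:FDPDvec} under Assumption~\ref{ass:proportional} into decoupled third-order subsystems, removal of the unobservable average mode, a Routh--Hurwitz check on the same characteristic polynomial the paper cites, and a per-mode $\mathcal{H}_2$ norm computation. The only cosmetic difference is that you evaluate each mode's norm from the standard third-order transfer-function integral table rather than by solving the Lyapunov equation as in the proof of Lemma~\ref{thm:static}; your transfer function, the identity $a_1 a_2 - a_0 a_3 = g\lambda_n(\tau^2 a_0 + a_2) + K_D(1+\tau g\lambda_n)$, and the resulting expression all check out.
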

\vspace{1.5mm}

\begin{proof}
This result is derived in the same manner as in the proof of Lemma~\ref{thm:static}. \newtext{The system's stability can in this case be verified through the characteristic polynomial $\xi^3 +(g\lambda_n + \frac{1}{\tau}) \xi^2 + (\frac{K_D+g\lambda_n}{\tau} +f_0 +f\lambda_n)\xi +\frac{f_0 +f\lambda_n}{\tau} = 0$, which satisfies the Routh-Hurwitz criterion.}
\end{proof}
Again, the expression is convoluted, but it reveals that the output variance is bounded for any network:
\begin{corollary}
\label{thm:fdpdcor}
For any positive and finite gains $K_D$ and $\tau$, the variance $V_N^{\mathrm{F-DPD}}$ in~\eqref{eq:normdfpf} is uniformly bounded in $N$. It holds that 
\begin{equation}
\label{eq:boundsfdpd}
0 < V_N^{\mathrm{F-DPD}} < \frac{\tau^2f_0 +1}{2f_0K_D}
\end{equation}
\end{corollary}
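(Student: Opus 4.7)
The plan is to mirror the structure of the proof of Corollary~\ref{thm:dapicor}: show that each summand in \eqref{eq:normdfpf} is bounded above by its value at $\lambda_n=0$, sum that bound $N-1$ times, and divide by $2N$ to obtain \eqref{eq:boundsfdpd}. The lower bound $V_N^{\mathrm{F\text{-}DPD}}>0$ is immediate from positivity of every summand (which requires that the denominator is positive for all $\lambda_n \ge 0$, something that also follows from the stability argument in Proposition~\ref{thm:fdpd}).

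The main step, and the one that needs a small trick, is the upper bound on the summand
\[
s(\lambda) = \frac{1}{D(\lambda)}, \qquad D(\lambda) = (f_0+f\lambda)\!\left(g\lambda + \frac{K_D(\tau g \lambda +1)}{\tau^2(f_0+f\lambda) + \tau g\lambda +1}\right).
\]
Unlike the DAPI case, $D$ is not obviously monotone in $\lambda$ once the low-pass filter enters, so I would avoid a direct derivative computation. Instead I would set $P := f_0 + f\lambda$ and $R := \tau g\lambda + 1$, noting $P\ge f_0$ and $R\ge 1$ for all $\lambda\ge 0$, and rewrite
\[
(f_0+f\lambda)\cdot\frac{\tau g\lambda +1}{\tau^2(f_0+f\lambda)+\tau g\lambda +1} \;=\; \frac{PR}{\tau^2 P + R} \;=\; \frac{1}{\tau^2/R + 1/P}.
\]
Since $1/R \le 1$ and $1/P \le 1/f_0$, the denominator satisfies $\tau^2/R + 1/P \le \tau^2 + 1/f_0 = (\tau^2 f_0 + 1)/f_0$, and therefore this quantity is bounded below by $f_0/(\tau^2 f_0 +1)$ for all $\lambda \ge 0$. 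Adding the nonnegative term $(f_0+f\lambda) g\lambda$ gives $D(\lambda) \ge K_D f_0/(\tau^2 f_0 +1)$, so $s(\lambda) \le (\tau^2 f_0 +1)/(K_D f_0)$.

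With this supremum in hand, the rest is bookkeeping: the sum in \eqref{eq:normdfpf} has $N-1$ terms, so
\[
V_N^{\mathrm{F\text{-}DPD}} \;\le\; \frac{N-1}{2N}\cdot\frac{\tau^2 f_0 +1}{K_D f_0} \;<\; \frac{\tau^2 f_0 +1}{2 K_D f_0},
\]
which is the claimed bound in \eqref{eq:boundsfdpd}. The expected obstacle is precisely the non-monotonicity of the filtered contribution in $D(\lambda)$; the reformulation as $1/(\tau^2/R+1/P)$ circumvents this by exposing that the bound is attained in the joint limit $P\to f_0$, $R\to 1$, i.e.\ $\lambda\to 0$, without needing pointwise monotonicity.
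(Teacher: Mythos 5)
Your proof is correct, and the overall skeleton matches the paper's: bound each summand by its supremum, note the sum has $N-1$ terms, and use $\frac{N-1}{2N}<\frac{1}{2}$ to get the strict bound. Where you differ is in how the key step is justified. The paper simply asserts that $s_n$ is monotonically decreasing in $\lambda_n$ (``it can be verified'') and takes the limit $\lambda_n\to 0$, which in practice requires differentiating a fairly messy rational function. You instead bound the denominator directly via the rewriting $\frac{PR}{\tau^2P+R}=\frac{1}{\tau^2/R+1/P}$ with $P=f_0+f\lambda_n\ge f_0$ and $R=\tau g\lambda_n+1\ge 1$, which yields $D(\lambda_n)\ge K_D f_0/(\tau^2 f_0+1)$ without any calculus. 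This is more elementary and self-contained, and as a bonus your reformulation actually \emph{proves} the monotonicity the paper only asserts: $P$ and $R$ are increasing in $\lambda_n$, so $\tau^2/R+1/P$ is decreasing, hence $K_D/(\tau^2/R+1/P)$ and the additional term $(f_0+f\lambda_n)g\lambda_n$ are both increasing, making $D$ increasing and $s=1/D$ decreasing. Both routes land on the same supremum value $(\tau^2 f_0+1)/(K_D f_0)$ and the same final bound; yours trades the unverified monotonicity claim for two one-line inequalities.
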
 \vspace{2mm}
\begin{proof}
Similar to the proof of Corollary~\ref{thm:dapicor}, it can be verified that $s_n =\frac{1}{  (f_0\!+\!f\ldn) \!  \left( \! g\ldn \!+ \!\frac{K_D(\tau g \ldn \!+\!1 )}{\tau^2 (f_0\!+ \! f\ldn)\! +\! \tau g \ldn \!+ \!1} \! \right)} $ from the sum in~\eqref{eq:normdfpf} is monotonically decreasing in $\ldn>0$. An upper bound is thus obtained as $\ldn \rightarrow 0$, which gives~$s_n < \frac{\tau^2f_0 +1}{f_0K_D} $ for all $n$. Therefore, $V_N^{\mathrm{F-DPD}} = \frac{1}{2N} \sum_{n = 2}^N s_n < \frac{N-1}{N} \frac{\tau^2f_0 +1}{f_0K_D} < \frac{\tau^2f_0 +1}{2f_0K_D}$.
\end{proof}

\vspace{1mm}
\setcounter{example}{0}
\begin{example}[Continued]
Consider again the formation of vehicles. Since no absolute feedback from velocities $v_i$ was available, we implement the F-DPD controller~\eqref{eq:fdpd}. Fig.~\ref{fig:fdpdsim} shows a simulation of a 100 vehicle formation under a white noise disturbance input. While the spacings between neighbors seem well-regulated in both systems, the P-controlled system appears less coherent. 
\end{example}

%

\section{Controller tuning for improved coherence}
\label{sec:tuning}
Neither the DAPI controller~\eqref{eq:dapi}, nor the F-DPD controller~\eqref{eq:fdpd} represents ideal integral or derivative action. In both cases, we have modeled filters to mitigate well-known issues related to noise and uncertainties. In this section, we discuss how the design of those filters impacts performance. 

\subsection{Optimal distributed averaging in DAPI}

As discussed in Section~\ref{sec:dapi}, robust stability of the DAPI controlled system~\eqref{eq:DAPIvec} requires an alignment between the agents' integral states $z_i$ through a distributed averaging filter. While the control objectives will be reached for any non-zero gains $c_{ij} $ \cite{SimpsonPorco2015}, an important control design question is how to choose these gains to optimize network performance. In our case, this choice is reflected through the constant $c$ (see Assumption~\ref{ass:uniformC}). Consider the following proposition. 
\begin{proposition}
\label{thm:cstarlemma}
For a given DAPI controlled system~\eqref{eq:DAPIvec}, there is \emph{either} a $c^\star>0$ that minimizes $V_N^{\mathrm{DAPI}}$ in~\eqref{eq:normdapi}, \emph{or} $V_N^{\mathrm{DAPI}}$ is minimized by $c^\star = 0$ and is monotonically increasing for $c>0$. 
The case $c^\star>0$ holds if $f > \frac{1}{\ldn}(g\ldn + g_0)^2$ for all $n = 2,\ldots,N$ and the case $c^\star = 0$ holds if $f \le \frac{1}{\ldn}(g\ldn + g_0)^2$ for all $n = 2,\ldots,N$. If none of these applies, $c^\star$ must be evaluated case-by-case.
\end{proposition}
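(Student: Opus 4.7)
The plan is to decompose $V_N^{\mathrm{DAPI}}(c)$ into its summands $s_n(c) := 1/h_n(c)$ for $n = 2, \ldots, N$, to analyze each $s_n$ on $c \ge 0$ as a scalar function, and then to aggregate. The sum preserves monotonicity and, under uniform sign conditions on the summands, also preserves the existence of an interior minimizer, so this term-by-term strategy yields both branches of the stated dichotomy.

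First I would perform a polynomial division in $c$ on the denominator $h_n(c)$. Both numerator and denominator inside the inner fraction of \eqref{eq:normdapi} are quadratic in $c$, and the division cleanly yields
\[
h_n(c) = fg\lambda_n^2 + g_0 f \lambda_n + \frac{A_n c + B_n}{P_n(c)},
\]
where $A_n = f\lambda_n(K_I - g g_0 \lambda_n)$, $B_n = K_I f(g_0 + \lambda_n g) > 0$, and $P_n(c) = \lambda_n c^2 + (g_0 + \lambda_n g) c + f > 0$ on $c \ge 0$. Maximizing $h_n$ (equivalently, minimizing $s_n$) then reduces to maximizing the rational term $(A_n c + B_n)/P_n(c)$, whose derivative has a quadratic-in-$c$ numerator.

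Next I would evaluate the sign of this derivative at $c = 0$. Up to a positive factor it equals
\[
K_I\bigl[f\lambda_n - (g\lambda_n + g_0)^2\bigr] - g g_0 f \lambda_n^2.
\]
If $f\lambda_n \le (g\lambda_n + g_0)^2$, the above is strictly negative (using $g,g_0>0$), so $s_n$ is strictly \emph{increasing} at $c = 0$. If instead $f\lambda_n > (g\lambda_n + g_0)^2$ holds for every $n$, then the bracket is positive for each $n$ and, for the integral gain $K_I$ exceeding $g g_0 f \lambda_n^2/[f\lambda_n - (g\lambda_n + g_0)^2]$, the quantity is positive and $s_n$ is strictly \emph{decreasing} at $c = 0$. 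The proposition's clean condition captures precisely the regime in which such a sign change is possible.

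Third, I would promote the local sign at $c = 0$ to global monotonicity on $c > 0$ via a short case split on the sign of $A_n$ (i.e., on whether $K_I < g g_0 \lambda_n$ or $K_I \ge g g_0 \lambda_n$). In each branch, the quadratic-in-$c$ numerator of $(d/dc)[(A_n c + B_n)/P_n(c)]$ has a known sign pattern on $[0, \infty)$ — either it is sign-definite or it crosses zero at exactly one positive $c_n^\star$ — from which one concludes that $s_n$ is either globally non-decreasing on $[0,\infty)$ or possesses a unique interior minimizer $c_n^\star > 0$, matching the sign of $s_n'(0)$.

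Finally, I would aggregate. If $f\lambda_n \le (g\lambda_n + g_0)^2$ for all $n$, each $s_n$ is non-decreasing, so $V_N^{\mathrm{DAPI}}$ is minimized at $c^\star = 0$ and is monotone in $c$ for $c>0$. If $f\lambda_n > (g\lambda_n + g_0)^2$ for all $n$, each $s_n$ decreases at $c = 0$; combined with the uniform lower bound in Corollary~\ref{thm:dapicor} and the finite limit of each summand as $c \to \infty$, the sum attains its minimum at some interior $c^\star > 0$. In mixed cases $(V_N^{\mathrm{DAPI}})'(0) = \tfrac{1}{2N}\sum_n s_n'(0)$ can have either sign depending on the spectrum $\{\lambda_n\}$, justifying the case-by-case caveat.

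The main obstacle is the third step: converting a single sign evaluation at $c = 0$ into global monotonicity of $s_n$ on $[0, \infty)$. The derivative test at a single point does not suffice, and it is the quadratic-in-$c$ case analysis that carries the argument. A secondary subtlety lies in the aggregation step: the sum over $n$ inherits the per-term behavior only when the signs are uniform across $n$, which is exactly why the mixed case is flagged as requiring case-by-case evaluation.
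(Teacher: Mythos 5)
Your strategy is the same as the paper's: differentiate each summand $s_n$ of \eqref{eq:normdapi} with respect to $c$, reduce the sign of $\mathrm{d}s_n/\mathrm{d}c$ to that of a quadratic in $c$, evaluate at $c=0$, and aggregate over $n$ (which is exactly why the mixed case must be left open). Your polynomial division, $h_n(c) = fg\lambda_n^2 + g_0 f\lambda_n + (A_nc+B_n)/P_n(c)$ with $A_n = f\lambda_n(K_I - gg_0\lambda_n)$, $B_n = K_I f(g_0+g\lambda_n)$, is a clean reformulation and checks out against \eqref{eq:normdapi} (and against the limits used in Remark~\ref{rem:zeroc} and Corollary~\ref{thm:dapicor}). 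The problem is that it does not deliver the proposition as stated. Your sign at $c=0$ is $K_I\bigl[f\lambda_n-(g\lambda_n+g_0)^2\bigr]-gg_0f\lambda_n^2$, whereas the paper's quotient-rule computation gives the numerator $K_I\lambda_n^2\bigl[(c+g+g_0/\lambda_n)^2-f/\lambda_n\bigr]$, whose value at $c=0$ is $K_I\bigl[(g\lambda_n+g_0)^2-f\lambda_n\bigr]$ with no $gg_0$ term and no $K_I$-dependence in the sign. The two agree only when $gg_0=0$. You noticed the extra term and patched it by imposing $K_I > gg_0f\lambda_n^2/[f\lambda_n-(g\lambda_n+g_0)^2]$, but that changes the statement being proven: under your algebra the condition $f>\frac{1}{\lambda_n}(g\lambda_n+g_0)^2$ is no longer sufficient for $s_n$ to decrease at $c=0$, and the closing sentence ``the proposition's clean condition captures precisely the regime in which such a sign change is possible'' concedes the gap rather than closing it. You must either show the extra term vanishes (it does not, if \eqref{eq:normdapi} is taken at face value) or flag the discrepancy explicitly; as written the argument proves a weaker, $K_I$-dependent dichotomy.

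Two secondary points. First, your step promoting the sign at $c=0$ to global behavior is incomplete in the branch $A_n<0$ (i.e., $K_I<gg_0\lambda_n$): there the numerator of $\mathrm{d}s_n/\mathrm{d}c$ is a downward parabola that is positive at $c=0$ and crosses zero once, so $s_n$ rises to an interior \emph{maximum} and then decreases toward its limit $1/(f\lambda_n(g\lambda_n+g_0))$; the minimizer is still $c=0$ (since $s_n(0)<\lim_{c\to\infty}s_n(c)$), but ``globally non-decreasing'' fails, and with it the monotonicity claim in the $c^\star=0$ branch. The paper avoids this because its quadratic is $K_I\lambda_n^2(c+g+g_0/\lambda_n)^2-K_If\lambda_n$, which is monotone increasing in $c\ge0$, so a single sign check at $c=0$ does settle everything. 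Second, your existence argument for an interior minimizer (``combined with the uniform lower bound in Corollary~\ref{thm:dapicor}'') is looser than needed; what does the work is simply $\frac{\mathrm{d}}{\mathrm{d}c}V_N^{\mathrm{DAPI}}\big|_{c=0}<0$ together with each $s_n$ tending to a finite limit strictly above $s_n(0)$, which is the paper's actual argument and also yields the explicit root $c_n^\star$ used in the complete-graph corollary.
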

\begin{proof}
See Appendix.
\end{proof}
Finding a closed form expression for $c^\star$ is in general not tractable due to the many terms in the sum~\eqref{eq:normdapi}, and must be done on a case-by-case basis. However, it is possible in the special case of a complete network graph: 
\begin{corollary}
If the network graph $\mathcal{G}$ is complete and the edge weights $l_{ij} = l$ for all $\{i,j\} \in \mathcal{E}$, then $c^\star$ is given by 
\begin{equation}
\label{eq:cstarcomplete}
c^\star = \sqrt{\frac{f}{Nl}} - g +\frac{g_0}{Nl}
\end{equation}
if this is a positive number. Otherwise $c^\star = 0$.
\end{corollary}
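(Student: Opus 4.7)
The plan is to exploit the spectral degeneracy of the complete graph. With uniform weights $l_{ij} = l$, the weighted Laplacian $\mathcal{L}$ equals $Nl\,I - lJ$, whose spectrum is $\{0, Nl, Nl, \ldots, Nl\}$, so $\lambda_n = Nl$ for every $n \geq 2$. Writing $\lambda := Nl$ and substituting into~\eqref{eq:normdapi} makes every summand identical, so $V_N^{\mathrm{DAPI}} = \frac{N-1}{2N}\,T(c)$ for a single rational function $T(c)$. Minimizing $V_N^{\mathrm{DAPI}}$ over $c \geq 0$ therefore reduces to optimizing this one-variable rational function, regardless of $N$.

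Writing $1/T(c) = fg\lambda^2 + A(c)/B(c)$, where $A(c)$ and $B(c)$ are the quadratic polynomials in $c$ read directly from the summand of~\eqref{eq:normdapi}, the first-order optimality condition $dT/dc = 0$ is equivalent to $A'(c)B(c) = A(c)B'(c)$. Expanding both sides, the leading $c^3$ terms cancel, leaving a quadratic equation in $c$. The crux of the proof is to show that this quadratic admits the claimed expression from~\eqref{eq:cstarcomplete} as its relevant positive root. I expect the cleanest path to be a substitution $u := \lambda(c+g) - g_0$ (equivalently $c = (u+g_0)/\lambda - g$), which ought to recast the optimality condition as a perfect square of the form $u^2 = f\lambda$; selecting the positive root $u = \sqrt{f\lambda}$ and inverting the substitution then recovers $c^\star = \sqrt{f/(Nl)} - g + g_0/(Nl)$.

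Finally, if this closed-form expression evaluates to a non-positive number, the dichotomy established in Proposition~\ref{thm:cstarlemma} forces the minimum of $V_N^{\mathrm{DAPI}}$ over $c \geq 0$ to lie at the boundary, so $c^\star = 0$. The main obstacle is the algebraic simplification in the middle step: without the right change of variables, the quadratic obtained from $A'B - AB' = 0$ is opaque, and its root does not visibly coincide with the claimed formula. Identifying and justifying the substitution (or, equivalently, verifying that the discriminant of the quadratic is a perfect square of $\sqrt{f\lambda}$ times an explicit factor) is where the computational work lies; once that is done, inverting the substitution and checking the sign condition are routine.
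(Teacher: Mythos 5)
Your overall route is the same as the paper's: substitute $\lambda_n = Nl$ so that all summands coincide, reduce to a one-variable rational optimization, apply the quotient rule, observe that the cubic terms cancel so the stationarity condition is a quadratic in $c$, take its positive root, and fall back on the dichotomy of Proposition~\ref{thm:cstarlemma} to conclude $c^\star = 0$ when the formula is nonpositive. The paper's proof is exactly this, leaning on the expression $p_n'q_n - p_nq_n' = K_I\lambda_n^2\bigl[\,c^2 + 2(g+\tfrac{g_0}{\lambda_n})c + (g+\tfrac{g_0}{\lambda_n})^2 - \tfrac{f}{\lambda_n}\bigr]$ already computed in the proof of that proposition.

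The gap is that you never actually perform the one computation on which the result rests. You write that the substitution $u = \lambda(c+g) - g_0$ ``ought to'' turn the condition $A'B = AB'$ into $u^2 = f\lambda$, and you acknowledge that identifying and justifying this is ``where the computational work lies'' --- but that work is precisely the proof, and a form guessed to reproduce the target formula is not a derivation of it. The concern is not merely formal: your conjectured completed square $\bigl(\lambda(c+g)-g_0\bigr)^2 = f\lambda$ gives $c = \sqrt{f/\lambda} - g + g_0/\lambda$, whereas the quadratic exhibited in the paper's proof of Proposition~\ref{thm:cstarlemma} completes to $\bigl(c + g + \tfrac{g_0}{\lambda}\bigr)^2 = \tfrac{f}{\lambda}$, whose positive root is $c = \sqrt{f/\lambda} - g - g_0/\lambda$; the two differ in the sign of the $g_0/\lambda$ term. (A direct check of $A'B-AB'$ from~\eqref{eq:normdapi} in the case $g=0$ gives $(\lambda c + g_0)^2 = f\lambda$, i.e.\ $c^\star = \sqrt{f/\lambda} - g_0/\lambda$, consistent with the proposition's quadratic rather than with your substitution.) So the missing algebra is not routine bookkeeping: carrying it out would have forced you to confront this sign discrepancy and either correct your substitution or flag the stated formula. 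As written, the proposal reverse-engineers the answer instead of establishing it.
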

\begin{proof}
In this case, the eigenvalues $\lambda_n = Nl$ for $n\ge 2$, and~\eqref{eq:cstarcomplete} follows from the proof of Proposition~\ref{thm:cstarlemma}.
\end{proof}
\begin{rem}
An explicit inclusion of measurement noise in the model increases the output variance $V_N^{\mathrm{DAPI}}$, and shifts the optimal $c^\star$. Such a model reveals that $c = 0$ is always suboptimal, since $V_N^\mathrm{DAPI}$ is then infinite.
\end{rem}

It is interesting to note that in cases where $c^\star >0 $, the performance of the DAPI controlled system at the optimum is better than with P control and absolute feedback from both $x_i$ and $v_i$ (as that performance is retrieved for $c=0$, see Remark~\ref{rem:zeroc}). 
Looking at e.g.~\eqref{eq:cstarcomplete}, we note that this occurs in particular if $g$ is  small relative to $f$, so that there is little alignment in the state $v$ between agents. This is the case in power networks, where in general  $g = 0$ (see Example~\ref{ex:power}). 
%
%

\subsection{Impact of low-pass filter in F-DPD}
The low pass filter in the F-DPD controller~\eqref{eq:fdpd} is included as a more realistic implementation of derivative action, which is otherwise well-known to be highly sensitive to high frequency variations and noise. While, by Corollary~\ref{thm:fdpdcor}, F-DPD achieves bounded output variance for any finite filter constant $\tau$, choosing a small value (corresponding to a high bandwidth) gives better performance:
\begin{proposition}
\label{thm:filter}
For a given F-DPD controlled system~\eqref{eq:FDPDvec}, the output variance $V_N^{\mathrm{F-DPD}}$ in~\eqref{eq:normdfpf} is minimized by $\tau = 0$ and is monotonically increasing for $\tau>0$.
\end{proposition}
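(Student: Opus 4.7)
The plan is to show that each summand in~\eqref{eq:normdfpf} is a strictly increasing function of $\tau$ on $(0,\infty)$ and continuous at $\tau=0$; the result then follows immediately by summing over $n$ and dividing by $2N$.

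Fix $n \in \{2,\ldots,N\}$ and set $a := f_0 + f\lambda_n$ and $b := g\lambda_n$. Since the graph is connected, $\lambda_n > 0$ for $n\ge 2$, and under the F-DPD setting we have $f_0>0$, so $a>0$ and $b\ge 0$. The $n$-th summand becomes
\begin{equation*}
s_n(\tau) \;=\; \frac{1}{a\,h(\tau)}, \qquad h(\tau) \;:=\; b + \frac{K_D(b\tau + 1)}{a\tau^2 + b\tau + 1}.
\end{equation*}
Because $a$ is independent of $\tau$, it suffices to prove that $h(\tau)$ is strictly decreasing on $(0,\infty)$, so that $s_n$ is strictly increasing there.

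The second step is a direct differentiation via the quotient rule. With $N(\tau) = K_D(b\tau+1)$ and $D(\tau) = a\tau^2 + b\tau + 1$, the numerator of $(N/D)'$ is
\begin{equation*}
K_D b (a\tau^2 + b\tau + 1) - K_D(b\tau + 1)(2a\tau + b) \;=\; -K_D\, a\tau\,(b\tau + 2),
\end{equation*}
after the $ab\tau^2$ and $b^2\tau$ cross terms cancel. Hence
\begin{equation*}
h'(\tau) \;=\; \frac{-K_D\, a\tau\,(b\tau+2)}{(a\tau^2 + b\tau + 1)^2},
\end{equation*}
which is strictly negative for $\tau>0$ (using $a>0$, $K_D>0$, $b\ge 0$). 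Thus $s_n(\tau)$ is strictly increasing on $(0,\infty)$, and its infimum on $[0,\infty)$ is attained at $\tau=0$, where $s_n(0) = 1/[(f_0+f\lambda_n)(g\lambda_n + K_D)]$ (consistent with the ideal PD limit).

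Summing over $n=2,\ldots,N$ and dividing by $2N$ yields that $V_N^{\mathrm{F\text{-}DPD}}$ is strictly increasing in $\tau$ on $(0,\infty)$ and minimized at $\tau=0$, which is the claim. The only nontrivial step is the algebraic cancellation in the numerator of $h'(\tau)$; beyond that, the argument is entirely routine. Note that the same structural fact — monotonicity termwise in the Laplacian spectrum — was used to establish the bounds in Corollary~\ref{thm:fdpdcor}, so the approach parallels that proof.
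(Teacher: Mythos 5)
Your proof is correct and follows essentially the same route as the paper's: differentiate each summand of~\eqref{eq:normdfpf} with respect to $\tau$, verify the derivative vanishes at $\tau=0$ and is strictly positive for $\tau>0$ (your cancellation to $-K_D a\tau(b\tau+2)$ checks out), and sum over $n$. Factoring out the $\tau$-independent term $a=f_0+f\lambda_n$ and working with $h(\tau)$ is a slightly cleaner organization of the same computation, and your resulting derivative is in fact consistent (up to an inconsequential constant) with the expression the paper states.
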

\vspace{1mm}
\begin{proof}
It holds that $\frac{\mathrm{d} }{\mathrm{d} \tau} V_N^{\mathrm{F-DPD}} = \frac{1}{2N} \sum_{n = 2}^N \frac{K_D g \ldn^2 \tau^2 + 2 K_D \ldn \tau}{2 (g^2 \ldn^2 \tau + f g \ldn^2 \tau^2 + f_0 g \ldn \tau^2 + K_D g \ldn \tau + g \ldn + K_D)^2} $. Since $K_D,\ldn,f_0>0$, $f,g \ge 0$, $\frac{\mathrm{d} }{\mathrm{d} \tau} V_N^{\mathrm{F-DPD}}  = 0$ for $\tau = 0$ and $\frac{\mathrm{d} }{\mathrm{d} \tau} V_N^{\mathrm{F-DPD}}  > 0$ for all $\tau > 0$.
\end{proof}
\vspace{0.6mm}

This is an intuitive result, as $\tau = 0$ would give ideal derivative action and therefore the most accurate substitute for absolute feedback from $v_i$. However, a greater value for $\tau$ would make the system less sensitive to noise. This trade-off must be done based on system-specific knowledge.

\begin{rem}
We have limited the analysis here to a first order filter, as higher order filters make the expressions for output variances even more convoluted. However, numerical results indicate that higher order filters increase $V_N^{\mathrm{F-DPD}}$, though it remains bounded.
\end{rem}

\section{Conclusions}
\label{sec:discussion}
In this paper, we have addressed limitations to the performance of the standard proportional consensus protocol in double-integrator networks. These limitations imply that the variance measure $V_N$ in~\eqref{eq:pernodevariance}, that characterizes lack of network coherence, scales unboundedly with network size in sparse networks, unless absolute feedback from both the system's state is available. We addressed these shortcomings by proposing distributed proportional integral (PI) and proportional derivative (PD) controllers, which radically improve performance \newtext{by making $V_N$ bounded} if absolute measurements from one of the system's states are available. 

The second-order consensus problem considered herein can be used to model a variety of applications ranging from biological networks to coordination of robots, see, for example,~\cite{RenAtkins2005}. In this paper, we have presented two examples from vehicular formations and power networks. Our results show that if agents have limited access to absolute feedback, the proposed PI and PD controllers are preferable to P control, in particular for large-scale networked systems. We remark that this conclusion holds even though the proposed controllers do not model \textit{ideal} integral and derivative action, but have filters that mitigate the effect of noise that would arise in a practical setting. 
\newtext{The impact of other practical network aspects, such as communication delays, non-reliability of the channels and non-symmetries, is an open and important research question that is left for future work. }

%
%
%
%

\section{Acknowledgements}
We would like to thank Bassam Bamieh for many interesting discussions and insightful comments related to this work. We are also grateful to Martin Andreasson and Hendrik Flamme for a number of incisive discussions, \newtext{as well as to anonymous reviewers for their valuable feedback.}

\section*{Appendix}
\subsection*{Proof of Lemma~\ref{thm:static}}
The output variance $V$ in~\eqref{eq:perfmeas} is obtained as the squared \hn norm of the system $\mathcal{S}$ from input $w$ in~\eqref{eq:staticvec} to the output~\eqref{eq:ydef}. \hn norm calculations for the types of systems considered herein have been presented in detail in \cite[Chapter 2]{Tegling2016Lic}, so we only give a summary proof here. We derive the norm through a unitary state transformation: $x =: U\hat{x}$, $v =: U\hat{v}$, where $U$ is the unitary matrix that diagonalizes the Laplacian matrix $\cl$ (and by Assumption~\ref{ass:proportional} also $\cl_F,\cl_G$), so that $\cl = U^*\Lambda U $ with $ \Lambda = \mathrm{diag}\{\lambda_1, \cdots, \lambda_N \}$. Due to the unitary invariance of the \hn norm, we can also transform the input and output according to $\hat{w} := U^*w,~\hat{y} := U^*y$.

This state transformation block-diagonalizes the system~\eqref{eq:staticvec}, which can now be described through $N$ decoupled subsystems $\hat{\mathcal{S}}_n$:
\begin{equation}
\begin{aligned}
\begin{bmatrix}
\dot{\hat{x}}_n \\ 
\dot{\hat{v}}_n
\end{bmatrix}  & = \underbrace{ \begin{bmatrix}
0 & 1 \\ 
- f  \lambda_n   -   f_0 & -  g  \ldn   -g_0
\end{bmatrix}}_{=:\hat{A}_n}    \begin{bmatrix}
\hat{x}_n\\ \hat{v}_n
\end{bmatrix}  +  \underbrace{ \begin{bmatrix}
0 \\ 1
\end{bmatrix}}_{=:\hat{B}_n}  \hat{w}_n   \\ 
\hat{y}_n &  =  \underbrace{\begin{bmatrix}
1 & 0 
\end{bmatrix}}_{=:\hat{C}_n} \begin{bmatrix}
\hat{x}_n\\ \hat{v}_n
\end{bmatrix}
\end{aligned}
\end{equation}
for $n = 2, \ldots,N $. 
The mode $n = 1$ corresponds to the average mode in $x$, which is unobservable from the output~\eqref{eq:ydef}, 
so $\hat{y}_1 \equiv 0$. \newtext{Therefore, even though this mode would be undamped if $f_0 = 0$, the overall system remains input-output stable since $\hat{A}_n$ is Hurwitz for $n\ge 2$ (its characteristic polynomial is $\xi^2 + (g_0 + g\lambda_n)\xi + f_0 + f \lambda_n =0$, which has only positive coefficients).   }

It holds that $V = ||\mathcal{S}||_2^2 = \sum_{n = 1}^N ||\hat{\mathcal{S}}_n||_2^2 =  \sum_{n = 2}^N ||\hat{\mathcal{S}}_n||_2^2 $, where the last equality is due to the fact that $\hat{y}_1 \equiv 0$ and therefore $||\hat{\mathcal{S}}_1||_2^2 = 0$. Each subsystem norm is obtained as $||\hat{\mathcal{S}}_n||_2^2 = \mathrm{tr}\{\hat{B}_n P_n\hat{B}_n\}$, where $P_n$ is the solution to the Lyapunov equation $\hat{A}_n^*P_n + P_n\hat{A}_n = -\hat{C}_n^*\hat{C}_n$.  Straightforward calculations give that $||\hat{\mathcal{S}}_n||_2^2 = \frac{1}{2(f\ldn + f_0)(g\ldn + g_0)}$ for each~$n$ and the result~\eqref{eq:normstatic} follows. 

\subsection*{Proof of Proposition~\ref{thm:cstarlemma}}
Clearly, $\frac{\mathrm{d}}{\mathrm{d}c}V_N^{\mathrm{DAPI}} =  \frac{1}{2N}\sum_{n = 2}^N \frac{\mathrm{d}}{\mathrm{d}c} s_n $, where $s_n$ was defined in the proof of Corollary~\ref{thm:dapicor}. Each term $s_n$ can be written as a fraction $s_n=\frac{p_n}{q_n}$. By the quotient rule,  $\frac{\mathrm{d}s_n}{\mathrm{d}c} = \frac{p'_nq_n - p_nq'_n}{(q_n)^2}$, where~$(q_n)^2 >0$ for all~$c$, and~$p'_nq_n - p_nq'_n =K_I\ldn^2 [  c^2 + 2( g + \frac{g_0}{\ldn} )c + ( g + \frac{g_0}{\ldn} )^2 - \frac{f}{\ldn} ] $.  Now, since $K_I,\ldn,g_0>0$, and $f,g \ge 0$ $,\frac{\mathrm{d}s_n}{\mathrm{d}c} >0$ for all $c>0$ if $\frac{f}{\ldn} \le ( g + \frac{g_0}{\ldn} )^2$ for all $n = 2,\ldots,N$. In this case $\frac{\mathrm{d}}{\mathrm{d}c}V_N^{\mathrm{DAPI}}>0$ for all $c>0$, and $c^\star = 0$. Conversely, if $\frac{f}{\ldn} > ( g + \frac{g_0}{\ldn} )^2$ for all $n = 2,\ldots,N$ then $\frac{\mathrm{d}s_n}{\mathrm{d}c} \big|_{c = 0} <0$, and each $s_n$ is minimized by \newtext{the positive root of the quadratic function above, and} $c^\star_n>0$.  Therefore, $\frac{\mathrm{d}}{\mathrm{d}c}V_N^{\mathrm{DAPI}}\big|_{c = 0} <0 $ and some $c^\star >0$ minimizes $V_N^{\mathrm{DAPI}}$. If $\frac{f}{\ldn} > ( g + \frac{g_0}{\ldn} )^2$ for some, but not all $n$, the existence of a positive minimizer $c^\star >0$ depends on remaining system parameters.

\bibliographystyle{IEEETran}
\bibliography{EmmasBib17_NonPS}

\end{document}